\documentclass[12pt]{article}
\usepackage{amsfonts,amsthm}

\usepackage{showkeys}

\newtheorem{theorem}{Theorem}[section]

\newtheorem{lemma}{Lemma}[section]

\newtheorem{remark}{Remark}[section]

\def\R{\mathbf R}

\def\E{\mathbf R^l}

\def\ld#1{#1^{\downarrow}_{H}}
\def\d#1{#1^{\prime}_{H}}

\newcommand{\scalpr}[2]{\langle#1,#2\rangle}
\newcommand{\norm}[1]{\Vert#1\Vert}

\begin{document}

\title{Optimality conditions via exact penalty functions}

\author{Vsevolod Ivanov Ivanov
\\
Department of Mathematics, \\
Technical University of Varna, 9010 Varna, Bulgaria
}

\maketitle

\begin{abstract}
In this paper, we obtain optimality conditions for the problem with inequality,  equality and closed set constraints in terms of the lower Hadamard derivative. The results are obtained applying exact penalty functions. 

{\it Keywords:} local minimum, isolated local minimum, nonsmooth analysis; nonsmooth optimization;
generalized convex functions; optimality conditions;
lower Hadamard directional derivative;

{\it AMS 2000 Mathematics Subject Classification  codes:}
49J52, 90C46, 26B25, 49K27, 90C26

\end{abstract}

\section{Introduction}
\label{s1}
Optimality conditions play important role in optimization.

In this article we obtain optimality conditions of Fritz John type for the
problem with inequality, equality and a set constraints via exact penalty
functions. The results are given in terms of lower Hadamard derivative.
Quadratic penalty functions were originally applied in McShane \cite{McS73}
for problems with continuously differentiable data. Our results use the
constraint qualification (\ref{6}). Similar assumption appeared in
Demyanov, Di Pillo and Facchinei \cite{dem98} and references therein.
Both assumptions coincide in the case when the problem has no inequality
and set constraints.

We cite the following papers where optimality conditions were obtained in
terms of the lower Hadamard derivative or the respective subdifferential.
Optimality conditions are given in  
Jimenez and Novo \cite{jim02},
Luu and Nguen \cite{luu05}.
Sufficient conditions for the problem with a set constraint are given in Penot
\cite[Proposition 5.7(a)]{pen84}, for the problem with inequality constraints
and a set constraint - in Ward \cite[Corollary 3.1]{war88}.
We refer to Ioffe \cite[Proposition 6]{iof84} where "fuzzy" Lagrange
multiplier rule were obtained in terms of the lower Hadamard subdifferential
under the name Dini subdifferential.
Necessary optimality conditions were derived in Glover, Craven
\cite[Theorem 2.1]{glo94} using the approximate subdifferential which
is constructed with the help of the lower Hadamard derivative.
Another Hadamard directional derivative is used in optimality conditions for
set-valued optimization, but it is constructed there with the help of the
limits of multifunctions in the sense of Painlev\'e-Kuratowski. This derivative
differs from the usual lower Hadamard derivative in the single-valued scalar case.

Let $\E$ be the $l$ dimensional Euclidean space and $S\subset\E$ a set in $\E$.
In the sequel we use the following notations:

\centerline{
$(-\infty,0)^k=\underbrace{(-\infty,0)\times (-\infty,0)\times\cdots\times
(-\infty,0)}_k$,}

\centerline{
$[0,\infty)^k=\underbrace{[0,\infty)\times [0,\infty)\times\cdots\times
[0,\infty)}_k$.
}

We consider the following cones.
The Bouligand tangent cone (or the contingent cone) \cite{bou32} of the set $S$
at the point $x\in\mathrm{cl}\, S$ is defined as follows:
\[
\begin{array}{c}
T(S,x):=\{u\in\E\mid\exists \{t_k\}\subset\R_>, t_k\to +0,
\exists \{u_k\}\subset\E, \\
u_k\to u\textrm{ such that }
x+t_ku_k\in S\textrm{ for all positive integers }k\}.
\end{array}
\]

The lower Hadamard conditional derivative \cite{dem98} of $f$ with respect to
the set $S$ at the point $x\in S$ in direction $u\in\E$ is defined as follows:
\[
\ld f (x;u;S)=\liminf_{(t,u^\prime)\to (+0,u),\, x+tu^\prime\in S}
t^{-1}(f(x+tu^\prime)-f(x)).
\]
By definition $\ld f (x;u;S)=+\infty$ if $u\notin T(S,x)$.
Somewhere it is called the contingent derivative.

Let $\bar f:\E\to\R\cup\{+\infty\}$ be the extension of $f$ such that
$\bar f(x)=+\infty$ for $x\in\E\setminus S$.
Then the lower conditional Hadamard derivative at the point $x\in S$ in direction $u\in\E$
could be defined as follows:
\[
\ld f (x;u;S)=\liminf_{(t,u^\prime)\to (+0,u)}
t^{-1}(\bar f(x+tu^\prime)-f(x)).
\]
If there exists the limit
\[
\d f (x;u;S)=\lim_{(t,u^\prime) \to (+0,u),\, x+tu^\prime\in S}
t^{-1}(f(x+tu^\prime)-f(x)),
\] then the function is called Hadamard differentiable
and $\d f (x;u)$ - its Hadamard derivative at $x\in S$ in direction $u\in\E$
(see, for instance, Demynov, Rubinov \cite{dem95}).

If the function $f$ attains its local minimum over the $S$ at the point
$\bar x$, then 
\[\ld f (x;u;S)\ge 0\quad\textrm{for all}\quad u\in\E\].

The paper is organized as follows: In section \ref{s2} we consider an exact penalty function and its conection with the optimality conditions.
In section \ref{s3} we obtain optimality conditions with non-strict inequalities. 
In section \ref{s4} we obtain optimality conditions with strict inequalities.

\section{Optimality and exact penalty function}
\label{s2}
Let $X\subset\E$ be a closed set and $f:\E \to\R$, $g_i:\E\to\R$, $i=1,2,...,m$,
$h_j:\E\to\R$, $j=1,2,...,q$ given functions. Consider the nonlinear
programming problem:


\vspace{0.5cm}
\noindent
$\textrm{Minimize }\;f(x)$ \\
$\textrm{subject to}\quad x\in X,\quad
g_i(x)\le0,\;i=1,2,...,m,\quad h_j(x)=0,\; j=1,2,...,q$. \hfill ${\rm{(P)}}$
\vspace{0.5cm}

Let $\bar x$ be a local minimizer of (P). Then there exists a number $\delta>0$
and a neighbourhood 
$$N_\delta(\bar x):=\{x\in\E\mid\norm{x-\bar x}\le\delta\}$$
of $\bar x$ such that $f(\bar x)\le f(x)$ for all feasible
$x\in N_\delta(\bar x)$.
Denote
\[
\begin{array}{rl}
h :=h_1^2+h_2^2+\cdots+h_q^2,\quad
G & :=\{x\in X \mid g_i(x)\le 0,\; i=1,2,...,m\},\\
G_\delta & :=G\cap N_\delta(\bar x).
\end{array}
\]
Then $\bar x$ is a global minimizer of the following problem:

\vspace{0.5cm}
\noindent
Minimize\quad $f(x)$\quad subject to \quad $x\in G_\delta$,\quad $h(x)=0$.\hfill (P$_1$)
\vspace{0.5cm}

For arbitrary constant $\gamma>0$ consider the penalty function defined on $G$
\[
F(x,\gamma):=f(x)+\gamma h(x)+0.5\norm{x-\bar x}^2.
\]

Denote
\[
S:=\{x\in G\mid h(x)=0\}.
\]

We prove under some hypotheses that there exists a positive integer $s$
such that each local minimizer of $f(x)$ over $S$ is a local minimizer of
$F(x,\gamma)$ over $G$ for all $\gamma>s$. 
Thus, we reduce the problem with inequality, equality and non-functional constraints to a problem with
inequality and non-functional ones. Then, the following question arises:
Is this reduction of the problem useful or useless? Our approach differs from
the customary practice when penalty functions are used. They usually
reduce the problem to unconstrained one. Taking into account the relation
between the original objective function and the new one, under our hypotheses,
the Lagrange multipliers in the Fritz John conditions in front of the
equality constraints are equal to zero. Thus, we obtain optimality conditions
which express in concrete form the standard Fritz John conditions.

In the next theorem we consider the function
\[
d(x):=\inf_{\norm{u}=1, u\in T(G,x)}\ld h(x;u;X)
\]
where $T(G,x)$ is the Bouligand tangent cone of the set $G$ at $x$.

\begin{theorem}\label{th-penalty}
Let $\bar x$ be a local minimizer  of the Problem {\rm (P)}, the set $X$ be
closed, the function $f$ be Lipschitz with a constant $L$ on
$N_\delta(\bar x)\cap X$ for some $\delta>0$, the functions $g_i$, $i=1,2,...,m$
be lower semicontinuous, and the functions $h_j$, $j=1,2,...,q$ be Hadamard
differentiable on $X$ and lower semicontinuous. Suppose that there exists $a>0$
with
\begin{equation}\label{6}
d(x)\le -a\quad\textrm{for all}\quad x\in N_\delta(\bar x)\setminus S.
\end{equation}
Then  there exists an integer $s$ such that $\bar x$ is a local minimizer
of $F(x,\gamma)$ on $G$ for all $\gamma>s$.
\end{theorem}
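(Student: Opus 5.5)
The plan is to argue by contradiction, in the spirit of the classical McShane argument, combined with an error-bound estimate derived from condition (\ref{6}). Suppose the conclusion fails. Then for every positive integer $k$ there is $\gamma_k>k$ such that $\bar x$ is not a local minimizer of $F(\cdot,\gamma_k)$ on $G$. Since $G_\delta$ is compact (the set $X$ is closed and the $g_i$ are lower semicontinuous) and $F(\cdot,\gamma_k)$ is lower semicontinuous on it (the term $f$ is continuous there, the term $h$ is lower semicontinuous as a sum of squares of functions that are continuous on $X$ by Hadamard differentiability), a minimizer $x_k$ of $F(\cdot,\gamma_k)$ over $G_{\delta'}$ exists for a fixed $\delta'<\delta$. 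By our assumption, $F(x_k,\gamma_k)<F(\bar x,\gamma_k)=f(\bar x)$ for all $k$, possibly after shrinking $\delta'$ along the sequence; more carefully, we take $\delta'$ fixed and obtain $F(x_k,\gamma_k)\le f(\bar x)$ with $x_k\ne\bar x$. Then $\gamma_k h(x_k)\le f(\bar x)-f(x_k)\le L\delta'$, so $h(x_k)\to 0$. Moreover $x_k\notin S$: if $h(x_k)=0$, then $x_k$ is feasible for (P$_1$), so $f(x_k)\ge f(\bar x)$, and then $0.5\norm{x_k-\bar x}^2\le 0$, which forces $x_k=\bar x$.

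The key step is an error bound: for $x\in N_{\delta}(\bar x)\cap G$ with $h(x)>0$ small, $\mathrm{dist}(x,S)\le h(x)/a$, or at least $\norm{x-y}\le h(x)/a$ for some $y\in S\cap N_\delta(\bar x)$. To prove it, I would use (\ref{6}) as a descent condition in the spirit of Ekeland's variational principle. Apply Ekeland to the lower semicontinuous function $h\ge 0$ on the complete set $G\cap N_\delta(\bar x)$, with $\varepsilon=h(x)$ and $\lambda=h(x)/a'$ for $a'<a$. This gives a point $z$ with $h(z)\le h(x)$, $\norm{z-x}\le h(x)/a'$, and such that $h(w)>h(z)-a'\norm{w-z}$ for all $w\ne z$ in the set. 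If $h(z)>0$, then $z\notin S$, and since $\norm{z-x}$ is small, $z$ is interior to $N_\delta(\bar x)$. Condition (\ref{6}) gives a unit $u\in T(G,z)$ and sequences $t_n\to+0$, $u_n\to u$, $z+t_nu_n\in G$, with $t_n^{-1}(h(z+t_nu_n)-h(z))<-a''$ for $a'<a''<a$. This contradicts the Ekeland inequality. Hence $h(z)=0$, so $z\in S$. Here Hadamard differentiability of the $h_j$ on $X$ is what makes the liminf over $X$ relevant along sequences that stay in $G$. I expect this Ekeland step to be the main obstacle, in particular reconciling $\ld h(z;u;X)$ with directions in $T(G,z)$ and keeping $z$ inside the neighbourhood.

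To conclude, apply the error bound to $x_k$ to get $y_k\in S$ with $\norm{x_k-y_k}\le h(x_k)/a'$, and $y_k\in G_\delta$ for large $k$. Optimality of $\bar x$ in (P$_1$) gives $f(y_k)\ge f(\bar x)$, so by the Lipschitz property $f(x_k)\ge f(\bar x)-L\,h(x_k)/a'$. Then
\[
f(\bar x)\ge F(x_k,\gamma_k)\ge f(\bar x)+\Bigl(\gamma_k-\frac{L}{a'}\Bigr)h(x_k)+0.5\norm{x_k-\bar x}^2 .
\]
For $\gamma_k>L/a'$ the right-hand side exceeds $f(\bar x)$ unless $x_k=\bar x$, which is a contradiction. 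This argument in fact shows directly that any integer $s\ge L/a$ works, so the contradiction framing can be replaced by a direct argument on a small ball, where the error bound applies because $h$ is continuous on $X$ and $h(\bar x)=0$.
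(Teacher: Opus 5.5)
Your proof is correct, but it takes a different route from the paper's.

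\textbf{The paper's route.} The paper never proves an error bound. It takes exact minimizers $x^k$ of $F(\cdot,\gamma^k)$ over the compact set $G_\delta$. It uses the term $0.5\norm{x-\bar x}^2$, together with $h(x^k)\to 0$, to force $x^k\to\bar x$, so that $x^k$ eventually lies in the interior of $N_\delta(\bar x)$. If infinitely many $x^k\notin S$, it picks a unit $u^k\in T(G,x^k)$ with $h^\prime_H(x^k;u^k;X)\le -a/2$. It then bounds the difference quotients of $F(\cdot,\gamma^k)$ along $u^k$ by $2L-\frac{1}{4}a\gamma^k+2t^k_n+2\delta<0$. This contradicts the first-order condition $\ld F(x^k,\gamma^k;u;G_\delta)\ge 0$ at the minimizer $x^k$. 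If instead $x^k\in S$ eventually, the quadratic term forces $x^k=\bar x$.

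\textbf{Your route.} The paper applies condition (\ref{6}) at the penalized minimizers themselves. You apply it at Ekeland points of $h$, obtaining the linear error bound $\mathrm{dist}(x,S\cap N_\delta(\bar x))\le h(x)/a'$ for $x\in G$ near $\bar x$. You then run the classical Clarke-type exact-penalty argument with the Lipschitz constant of $f$.

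\textbf{What the two share.} Both arguments use Hadamard differentiability in the same way: to turn the liminf over $X$ into a genuine limit along sequences lying in $G$. Both also need the point where (\ref{6}) is invoked to be interior to $N_\delta(\bar x)$. The paper gets this from $x^k\to\bar x$; you get it from the smallness of $h(x)/a'+\norm{x-\bar x}$.

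\textbf{What each buys.}
\begin{itemize}
\item The paper's argument is more elementary, since it needs no variational principle. However, it is purely by contradiction and gives no explicit threshold for $\gamma$.
\item Your argument costs Ekeland's principle. In return it gives a direct proof with the explicit threshold $\gamma>L/a$. It shows that the term $0.5\norm{x-\bar x}^2$ is not needed for exactness, and it isolates an error bound that does not depend on $f$.
\end{itemize}

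Your justification of the lower semicontinuity of $h$, via continuity of the $h_j$ relative to $X$ coming from Hadamard differentiability, is also sounder than the paper's. The paper appeals to lower semicontinuity of the $h_j$, but squares of lower semicontinuous functions need not be lower semicontinuous.
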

\begin{proof}
Without loss of generality we suppose that
the point $\bar x$ is a global minimizer of the problem (P$_1$).
We prove that there exists an integer $s$ such that $\bar x$ is a global
minimizer of $F(x,\gamma)$ on $G_\delta$ for all $\gamma>s$.
Assume the contrary that for all integers $k$ there exists $\gamma^k>k$
such that $\bar x$ is not a global minimizer of $F(x,\gamma^k)$ on $G_\delta$. 
Since $F$ is lower semicontinuous and $G_\delta$ is compact, we could suppose
that the minimum of $F(x,\gamma^k)$ on $G_\delta$ is attained at some point
$x^k$, $x^k\ne\bar x$. Therefore
\begin{equation}\label{p1}
f(x^k)+\gamma^k h(x^k)+0.5\norm{x^k-\bar x}^2=F(x^k,\gamma^k)\le F(\bar x,\gamma^k)=f(\bar x).
\end{equation}
Since the sequence $\{f(x^k)\}_{k=1}^\infty$ is bounded on $G_\delta$ and
$\gamma^k\to\infty$, then $\lim_{k\to\infty} h(x^k)=0$. By $x^k\in N_\delta(\bar x)$
there exists an accumulation point $x^*$ of the sequence $\{x^k\}$.
Without loss of generality $x^k\to x^*$. Since $G_\delta$ is closed, then $x^*\in G_\delta$.
By the lower semicontinuity of $h$ we have
\[
h(x^*)\le\liminf_{k\to\infty}h(x^k)=0.
\]
Using that $h$ is non-negative, we obtain $h(x^*)=0$.
Therefore $x^*$ is feasible for (P). By
\[
f(x^k)+0.5\norm{x^k-\bar x}^2\le f(x^k)+\gamma^k h(x^k)+0.5\norm{x^k-\bar x}^2\le f(\bar x)
\]
we obtain that
\[
f(x^*)+0.5\norm{x^*-\bar x}^2\le f(\bar x)\le f(x^*).
\]
Hence
$x^*\equiv\bar x$. Since every convergent subsequence of $\{x^k\}$ converges
to $\bar x$, then the whole sequence $\{x^k\}$ converges to $\bar x$.
It follows from the minimality of $x^k$ that
\begin{equation}\label{3}
\ld F(x^k,\gamma^k;u;G_\delta)\ge 0\quad\forall u\in \E
\end{equation}

Consider the following two cases:

1$^0$) 
Let us suppose that there exist an infinite number of points $x^k$ such that $x^k\notin S$. 
It follows from $d(x^k)\le -a$ that there exists
\[
u^k\in T(G,x^k) \quad\textrm{with}\quad
\norm{u^k}=1,\quad  h^\prime_H(x^k;u^k;X)\le -\frac{a}{2}.
\]
Let $k$ be fixed. Then
there exist $(t^k_n,u^k_n)\to(+0,u^k)$ when $n\to +\infty$ such that
$x^k+t^k_n u^k_n\in G$. By Hadamard differentiability of $h$ on $X$ we have
\begin{equation}\label{p2}
\lim_{n\to+\infty}\frac{h(x^k+t^k_n u^k_n)-h(x^k)}{t^k_n}=
h^\prime_H(x^k;u^k;X)\le -\frac{a}{2}.
\end{equation}
Hence $\norm{u^k_n}\le 2$ and
\begin{equation}\label{p3}
h(x^k+t^k_n u^k_n)-h(x^k)\le -\frac{a}{4}t^k_n
\end{equation}
for all sufficiently large $n$. Since $x^k+t^k_n u^k_n\in N_\delta(\bar x)$
for sufficiently large $n$ and $k$ because $x^k\to\bar x$, we have $x^k+t^k_n u^k_n\in G_\delta$
and $u^k\in T(G_\delta,x^k)$. Consider the difference
\[
\begin{array}{rrl}
D^k_n & := & F(x^k+t^k_n u^k_n,\gamma^k)-F(x^k,\gamma^k)=f(x^k+t^k_n u^k_n)-f(x^k)\\
&+ & \gamma^k(h(x^k+t^k_n u^k_n)-h(x^k))+\frac{1}{2}\norm{x^k+t^k_n u^k_n-\bar x}^2-
\frac{1}{2}\norm{x^k-\bar x}^2.
\end{array}
\]
By the Lipschitz property
\[
|f(x^k+t^k_n u^k_n)-f(x^k)|\le L t^k_n\norm{u^k_n}\le 2L t^k_n.
\]
Therefore
\[
D^k_n/t^k_n\le 2L-\frac{1}{4}a\gamma^k+\frac{1}{2}t^k_n\norm{u^k_n}^2+
\scalpr{x^k-\bar x}{u^k_n}\le 2L-\frac{1}{4} a\gamma^k+2t^k_n+2\delta
\]
Thus we have for all sufficiently large $k$ that
\[
\ld F(x^k,\gamma^k;u^k;G_\delta)\le\liminf_{n\to\infty}D^k_n/t^k_n<0.
\]
The last inequality  contradicts inequality (\ref{3}). Therefore, this case is impossible. 

2$^0$) All points $x^k$ except a finite number belong to $S$.
Without loss of generality $x^k\in S$ for all positive integers $k$. Hence
\[
f(x^k)+
\frac{1}{2}\norm{x^k-\bar x}^2\le F(x^k,\gamma^k)=\min_{x\in G_\delta}F(x,\gamma^k)
\le F(\bar x,\gamma^k)=f(\bar x)\le f(x^k).
\]
Therefore $x^k\equiv\bar x$, a contradiction with $x^k\ne\bar x$.
 
Thus,  $\bar x$ is a global minimizer of $F(x,\gamma)$ on
$G_\delta$ for all sufficiently large $\gamma>0$. Therefore
$\bar x$ is a local minimizer of $F(x,\gamma)$ on $G$ for these $\gamma$.
\end{proof}

\begin{remark}
Similar assumption to the constraint qualification (\ref{6}) appeared in 
Demyanov, Di Pillo, Facchinei \cite{dem98}. 
More conditions for local exact penalties in terms of the lower Hadamard
directional derivative could be found in Rosenberg \cite[Theorem 3]{ros84},
Ward \cite[Theorem 4.1]{war88}.
\end{remark}

\section{Optimality conditions with non-strict inequalities}
\label{s3}

For every feasible point $x$ of the problem (P) denote the set
of active constraints by
\[
I(x):=\{i\in\{1,2,...,m\}\mid g_i(x)=0\}.
\]
Suppose that $I(\bar x)=\{1,2,....,p\}$ where $p\le m$.

Besides the usual algebraic operations with infinities,
we accept that 
$$(\pm\infty)\cdot 0=0\cdot (\pm\infty)=0.$$

\begin{theorem}
{\bf (Necessary condition for a local minimum)}
Let $\bar x$ be a local minimizer of {\rm (P)} and the set $X$ be closed.
Suppose that the function $f$ is Lipschitz with a constant $L$
on $N_\delta(\bar x)\cap X$ for some $\delta>0$, the functions $h_j$,
$j=1,2,...,q$ are Hadamard differentiable and continuous,
the functions $g_i$, $i\in I(\bar x)$ are Hadamard differentiable and lower
semicontinuous, the functions $g_i$, $i\notin I(\bar x)$ are continuous.
Assume that there exists $a>0$ with $d(x)\le -a$ for all
$x\in N_\delta(\bar x)\setminus S$. Then, for every $u\in\E$
there exist
\[
\lambda=(\lambda_0,\lambda_1,...,\lambda_p)\in[0,\infty)^{p+1}, \;\;\lambda\ne 0
\]
such that
\begin{equation}\label{lagrange}
\lambda_0\ld F(\bar x;u;X)+\sum_{i=1}^p\lambda_i\d {(g_i)} (\bar x;u;X)\ge 0.
\end{equation}
If $\ld F (\bar x;u;X)=-\infty$, then $\lambda_0=0$.
If $\d {(g_i)} (\bar x;u;X)=-\infty$, then $\lambda_i=0$  $(i=1,2,...,p)$.
\end{theorem}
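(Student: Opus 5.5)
The plan is to use Theorem \ref{th-penalty} to turn (P) into a problem with only inequality and set constraints, and then to derive the multiplier rule direction by direction from a first-order analysis. Here $F$ denotes $F(\cdot,\gamma)$ for a fixed $\gamma>s$. The hypotheses of the present theorem contain those of Theorem \ref{th-penalty}, since continuity implies lower semicontinuity. Hence $\bar x$ is a local minimizer of $F(x,\gamma)$ on $G$. In other words, $\bar x$ locally minimizes $F$ over $X$ subject to $g_i(x)\le 0$, $i=1,\dots,m$.

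The inactive constraints are continuous and satisfy $g_i(\bar x)<0$ for $i>p$. So near $\bar x$ they can be dropped, and $\bar x$ is a local minimizer of $F$ on $\{x\in X\mid g_i(x)\le 0,\ i=1,\dots,p\}$.

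The key step is to show that for each fixed $u$ the system
\[
\ld F(\bar x;u;X)<0,\qquad \d{(g_i)}(\bar x;u;X)<0,\quad i=1,\dots,p,
\]
has no solution. Suppose it did. By the definition of the lower Hadamard derivative there are $t_n\to+0$ and $u_n\to u$ with $\bar x+t_nu_n\in X$ and $F(\bar x+t_nu_n)<F(\bar x)$ for large $n$. In particular $u\in T(X,\bar x)$, because otherwise $\ld F(\bar x;u;X)=+\infty$.

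Along this same sequence, Hadamard differentiability of $g_i$ gives the full limit
\[
t_n^{-1}\bigl(g_i(\bar x+t_nu_n)-g_i(\bar x)\bigr)\to\d{(g_i)}(\bar x;u;X)<0 .
\]
Since $g_i(\bar x)=0$ for $i\le p$, this yields $g_i(\bar x+t_nu_n)<0$ for large $n$. The points $\bar x+t_nu_n$ are therefore feasible, arbitrarily close to $\bar x$, and strictly better than $\bar x$, which contradicts local minimality.

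This step is the main point of care. It needs the derivatives of the $g_i$ to be full limits along every admissible sequence, so that the sequence chosen to realize the liminf for $F$ also works for the constraints. It also needs the cases where some derivatives equal $-\infty$ to be handled consistently. If $\ld F(\bar x;u;X)=+\infty$, or some $\d{(g_i)}(\bar x;u;X)\ge 0$, the system already fails trivially.

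Once the system is known to be inconsistent, the multipliers are chosen from the real numbers $a_0=\ld F(\bar x;u;X)$ and $a_i=\d{(g_i)}(\bar x;u;X)$ in $\R\cup\{\pm\infty\}$, using the convention $0\cdot(\pm\infty)=0$:
\begin{enumerate}
\item If some $a_k\ge 0$ (possibly $+\infty$), put $\lambda_k=1$ and all other $\lambda$'s equal to $0$. Then the sum in (\ref{lagrange}) equals $a_k\ge 0$.
\item Otherwise every $a_k<0$, and at least one of them is $-\infty$, since by the key step the system with all $a_k$ finite and negative has no solution. This case cannot occur, because the key step already produces a contradiction whenever all $a_k<0$, whether finite or $-\infty$.
\end{enumerate}
Thus case 1 always applies. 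The chosen index satisfies $a_k\ge 0$, so $a_k\neq-\infty$. Every other $\lambda_i$ is $0$. Hence the side conditions hold: any $\lambda_i$ whose derivative equals $-\infty$ is zero. This completes the argument for every $u\in\E$, with $\lambda\ne 0$.
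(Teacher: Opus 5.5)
Your argument is correct, and its first half follows the paper. You apply Theorem \ref{th-penalty}, correctly noting that continuity supplies the lower semicontinuity it needs. You then discard the inactive constraints by continuity. Finally, you show the strict system $\ld F(\bar x;u;X)<0$, $\d{(g_i)}(\bar x;u;X)<0$ ($i\in I(\bar x)$) is inconsistent. This uses the fact that a Hadamard derivative is a full limit, so the sequence realizing the liminf for $F$ also makes the active $g_i$ negative.

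You depart from the paper at the multiplier step. The paper separates $\bar\alpha$ from the convex set $(-\infty,0)^{p+1}$ and derives $\lambda_i\ge 0$ by a limiting argument. It then handles infinite components separately, re-applying separation after deleting the $-\infty$ coordinates. You instead note that inconsistency forces some $a_k\ge 0$; your key step establishes this even when some entries equal $-\infty$. You then take $\lambda$ to be the $k$-th unit vector. With $0\cdot(\pm\infty)=0$, this gives the inequality, $\lambda\ne 0$ and the side conditions at once, since $a_k\ne-\infty$.

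Your route is more elementary and handles the extended-real cases more cleanly. It also makes plain that, for fixed $u$, the theorem asserts only that one of the $p+1$ derivatives is nonnegative. The separation theorem is the tool one would need for multipliers independent of $u$, but the statement does not claim that.

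One cosmetic point: the first sentence of your item 2 suggests a leftover case with $-\infty$ entries. Since the key step already covers those, you can state directly that some $a_k\ge 0$.
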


\begin{proof}
By Theorem \ref{th-penalty} there exists an integer $s$ such that $\bar x$
is a local minimizer of $F(x,\gamma)$ on $G$ for all $\gamma>s$.

1$^0$)  We prove that there is no $u\in\E$ such that
\begin{equation}\label{4}
\ld F (\bar x,\gamma;u;X)<0,\quad \d {(g_i)}(\bar x;u;X)<0,\; i\in I(\bar x)
\end{equation}
where $\gamma>s$ is fixed.
Assume that there exists $u\in\E$ which satisfies the system (\ref{4}) with
$\gamma>s$ fixed.
Therefore, there are sequences $\{t_k\}\subset (0,\infty)$, $t_k\to +0$,
$\{u_k\}\subset\E,$ $u_k\to u$ with $\bar x+t_ku_k\in X$ such that
\[
F(\bar x+t_ku_k,\gamma)<F(\bar x,\gamma), \quad
g_i(\bar x+t_ku_k)<g_i(\bar x)=0,\; i\in I(\bar x).
\]
It follows from the continuity of $g_i$ that
$g_i(\bar x+t_ku_k)<0$ for all sufficiently large $k$ because $g_i(\bar x)<0$
when $i\notin I(\bar x)$. Hence $\bar x$ is not a local minimizer of
$F(x,\gamma)$ on $G$, a contradiction.

2$^0$)  Let $u$ be arbitrary fixed. Denote
\[
\bar\alpha_0=\ld F (\bar x,\gamma;u;X),\;
\bar\alpha_i=\d {(g_i)}(\bar x;u;X), \; i=1,2,...,p,\;
\bar\alpha=(\bar\alpha_0,\bar\alpha_1,....,\bar\alpha_p).
\]
We have $\bar\alpha\notin(-\infty,0)^{p+1}$ and $(-\infty,0)^{p+1}$ is convex.
Applying the Separation Theorem we see that there exists
$\lambda=(\lambda_0,\lambda_1,...,\lambda_p)\ne 0$ such that
\[
\scalpr{\lambda}{\bar\alpha}\ge 0\quad\textrm{and}\quad\scalpr{\lambda}{\alpha}\le 0
\]
for all $\alpha\in(-\infty,0)^{p+1}$.  Let $\alpha=(\alpha_0,\alpha_1,\dots,\alpha_p)$ and    let $i\in\{0,1,...,p\}$ be arbitrary fixed.
Taking $\alpha_j\to 0$ when $j\ne i$ we get that $\lambda_i\alpha_i\le 0$
for all $\alpha_i\in(-\infty,0)$. Therefore $\lambda_i\ge 0$.
We obtain from $\scalpr{\lambda}{\bar\alpha}\ge 0$ that
\begin{equation}\label{5}
\lambda_0\ld F(\bar x,\gamma;u;X)+\sum_{i=1}^p\lambda_i\d {(g_i)} (\bar x;u;X)\ge 0.
\end{equation}
The inequality (\ref{5}) holds if among the components of $\bar\alpha$
there are infinities. Indeed, it follows from (\ref{4}) that at least one
$\bar\alpha_i>-\infty$. If there is  $i\in\{0,1,....,p\}$ such that
$\bar\alpha_i=+\infty$, then (\ref{5}) is trivially satisfied. We take
$\lambda_i=1$ for all such $i$.
For all $\bar\alpha_i\in[-\infty,+\infty)$ we choose $\lambda_i=0$.
If there is no $i\in\{0,1,....,p\}$ with $\bar\alpha_i=+\infty$,
then  we apply the
Separation Theorem again removing from $\bar\alpha$ all components
$\bar\alpha_i$ such that $\bar\alpha_i=-\infty$. Hence, we obtain the inequality
(\ref{5}) again taking $\lambda_i=0$ if $\bar\alpha_i=-\infty$.

\end{proof}

\section{Optimality conditions with strict inequalities}
\label{s4}
A feasible point $\bar x$ is called an isolated local minimizer
if there exist a positive real $A$ and a neighborhood $N\ni\bar x$ such that
\[
f(x)\ge f(\bar x)+ A\,\norm{x-\bar x}
\]
for all feasible points $x$ with $x\in N$.

\begin{theorem}
Let $\bar x$  be a feasible point for the problem (P). Suppose that there exist
\[
\lambda=(\lambda_0,\lambda_1,...,\lambda_p)\in\R^{1+p}
\]
with
\begin{equation}\label{strictlagrange}
\lambda_0\ld F(\bar x;u;X)+\sum_{i=1}^p\lambda_i\d {(g_i)} (\bar x;u;X)>0.
\end{equation}
for all directions $u\in\E$. Then $\bar x$ is an isolated local minimizer
of the problem {\rm (P)}.
\end{theorem}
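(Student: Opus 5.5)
The plan is a contradiction argument along a sequence of feasible points that violate the isolated-minimum inequality with ever smaller constants. The statement has to be read with the standing conventions of the previous theorem. The multipliers are assumed nonnegative, $\lambda\in[0,\infty)^{1+p}$, as produced by the necessary condition. The $g_i$, $i\in I(\bar x)$, are Hadamard differentiable on $X$. $F$ denotes $F(\cdot,\gamma)$ for some fixed $\gamma>0$. The strict inequality (\ref{strictlagrange}) is required only for $u\neq 0$, or equivalently for $\norm{u}=1$ by positive homogeneity. For $u=0$ the left-hand side is always $\le 0$, since $\ld F(\bar x;0;X)\le 0$ and $\d{(g_i)}(\bar x;0;X)=0$, so the statement cannot be meant literally for $u=0$. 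I would state these readings explicitly at the start.

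Suppose $\bar x$ is not an isolated local minimizer. Then for every positive integer $k$ there is a feasible $x^k$ with $\norm{x^k-\bar x}<1/k$ and
\[
f(x^k)<f(\bar x)+\tfrac1k\norm{x^k-\bar x}.
\]
Necessarily $x^k\ne\bar x$. Write $t_k=\norm{x^k-\bar x}\to+0$ and $u_k=(x^k-\bar x)/t_k$, so $\norm{u_k}=1$. By compactness of the unit sphere, pass to a subsequence with $u_k\to u$ and $\norm{u}=1$. Then $\bar x+t_ku_k=x^k\in X$, so $(t_k,u_k)$ is an admissible sequence in the definitions of the lower Hadamard derivative and the Hadamard derivative with respect to $X$.

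Next estimate each derivative along this sequence. Feasibility gives $h(x^k)=0=h(\bar x)$, so
\[
F(x^k,\gamma)-F(\bar x,\gamma)=f(x^k)-f(\bar x)+\tfrac12 t_k^2<\tfrac{t_k}{k}+\tfrac12t_k^2 .
\]
Dividing by $t_k$ and taking the liminf yields $\ld F(\bar x;u;X)\le 0$, possibly equal to $-\infty$. For $i\in I(\bar x)$ we have $g_i(x^k)\le 0=g_i(\bar x)$. Hadamard differentiability gives $\d{(g_i)}(\bar x;u;X)=\lim_k t_k^{-1}(g_i(x^k)-g_i(\bar x))\le 0$.

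Since $\lambda_i\ge0$, and using the convention $0\cdot(\pm\infty)=0$, every term $\lambda_0\ld F(\bar x;u;X)$ and $\lambda_i\d{(g_i)}(\bar x;u;X)$ is $\le 0$ or $-\infty$. No $+\infty$ appears, so the sum is well defined and $\le 0$. This contradicts (\ref{strictlagrange}) at the unit direction $u$, which proves the theorem.

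The main obstacle is the bookkeeping rather than any estimate. First, the infinite values must be handled, which the $\le 0$ bounds and the convention $0\cdot\infty=0$ take care of, since no $+\infty$ can occur. Second, the hypotheses on the signs of $\lambda$ and the excluded direction $u=0$ must be made explicit; without $\lambda\ge0$ the argument fails.
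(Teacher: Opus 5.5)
Your argument is correct, but it organizes the proof differently from the paper.

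The paper proceeds in two steps. First, using the same normalization $t_k=\norm{x_k-\bar x}$, $u_k=(x_k-\bar x)/t_k$ and the same sign argument, it shows that $\bar x$ is an isolated local minimizer of $F(\cdot,\gamma)$ on the whole of $G$, including points that violate the equality constraints. Second, it restricts to $h=0$ and absorbs the term $0.5\norm{x-\bar x}^2$ into the linear growth $A\norm{x-\bar x}$ by shrinking the radius to some $\delta_1<2A$.

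You run the contradiction directly on feasible points of (P). There $h(x^k)=0$ and the quadratic term is $\frac12 t_k^2=o(t_k)$, so the two steps collapse into one and $\gamma$ plays no role.

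The paper's route yields an extra conclusion about the penalized problem on $G$, in the spirit of Theorem~\ref{th4.2}. Your route is shorter and makes explicit three points the paper uses only tacitly.
\begin{enumerate}
\item $\lambda\ge 0$ is needed. The paper invokes it even though the statement says $\lambda\in\R^{1+p}$, and without it the claim is false. Take $f(x)=-|x|$ on $X=\R$ with no constraints, $\bar x=0$ and $\lambda_0=-1$: the strict inequality holds for every $u\neq 0$, yet $\bar x$ is a maximizer.
\item A convergent subsequence $u_k\to u$ with $\norm{u}=1$ must be extracted. The paper never defines its limit direction $u$.
\item The strict inequality can only be required for $u\neq 0$, since at $u=0$ the left-hand side is $\le 0$ whenever $\lambda\ge 0$.
\end{enumerate}
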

\begin{proof}
1$^0$) First, we prove that $\bar x$ is an isolated local minimizer of the function $F(x,\gamma)$ on $G$
for all sufficiently large $\gamma>0$. Indeed, assume the contrary that there exist
$\gamma>0$ such that for every $\varepsilon>0$ there is
$x\in N_\varepsilon(\bar x)\cap G$ with
\[
F(x,\gamma)<F(\bar x,\gamma)+\varepsilon\norm{x-\bar x}.
\] 
Therefore,
for every sequence of positive numbers $\{\varepsilon_k\}$ converging to 0
there exists a sequence $\{x_k\}$ such that $x_k\to\bar x$ and
\[
F(x_k,\gamma)<F(\bar x,\gamma)+\varepsilon_k\norm{x_k-\bar x}.
\]
Denote $t_k=\norm{x_k-\bar x}$ and $u_k=(x_k-\bar x)/t_k$. We have that
\[
F(\bar x+t_k u_k,\gamma)-F(\bar x,\gamma)<\varepsilon_k t_k.
\]
Therefore $\ld F(\bar x,\gamma;u;X)\le 0$.
Because of $x_k\in G$ we have $g_i(x_k)\le g_i(\bar x)=0$ for all
$i\in I(\bar x)$. Therefore, $\d {(g_i)}(\bar x,\gamma;u;X)\le 0$.


According to $\lambda_i\ge 0$, $i=0,1,...,m$,  we obtain that
\[
\lambda_0\ld F(\bar x;u;X)+\sum_{i=1}^p\lambda_i\d {(g_i)} (\bar x;u;X)\le 0
\]
which contradicts the hypothesis of the theorem.

2$^0$) We prove $\bar x$ is an isolated local minimizer of (P).
Since $\bar x$ is an isolated local minimizer of   $F(x,\gamma)$ there exist
$A>0$ and $\delta>0$ such that
\[
f(x)+\gamma h(x)+0.5\norm{x-\bar x}^2\ge f(\bar x)+A\norm{x-\bar x}
\]
for all $x\in N_\delta(\bar x)\cap G$. The inequality is also satisfied for all  $\delta_1<\delta$.   Suppose that $x$ is feasible for (P).
We choose $\delta_1$ such that $\delta_1<2A$. Taking into account that
$\norm{x-\bar x}\le\delta_1$ we obtain
\[
f(x)\ge f(\bar x)+(A-0.5\delta_1)\norm{x-\bar x}\quad\forall x\in G_{\delta_1}
\textrm{ with } h(x)=0
\]
which implies that $\bar x$ is an isolated local minimizer of (P).
\end{proof}

\begin{theorem}
\label{th4.2}
Additionally to the hypothesis of Theorem \ref{th-penalty} we suppose that
$\bar x$ is an isolated local minimizer. Then there exists an integer $s$
such that such that $\bar x$ is an isolated local minimizer of $F(x,\gamma)$
on $G$ for all $\gamma>s$.
\end{theorem}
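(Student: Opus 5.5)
We mimic the proof of Theorem \ref{th-penalty}, replacing $f$ by a perturbed objective. Since $\bar x$ is an isolated local minimizer of (P), there are $A>0$ and (after shrinking) $\delta>0$ such that
\[
f(x)\ge f(\bar x)+A\norm{x-\bar x}
\]
for all $x\in S\cap N_\delta(\bar x)$.

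Consider $\tilde f(x):=f(x)-A\norm{x-\bar x}$. This function is Lipschitz on $N_\delta(\bar x)\cap X$ with constant $L+A$. Moreover, $\bar x$ is a local (indeed global on $S\cap N_\delta(\bar x)$) minimizer of $\tilde f$ subject to the constraints of (P). All other hypotheses of Theorem \ref{th-penalty} concern only $X$, $g_i$, $h_j$ and the constraint qualification (\ref{6}), so they are unchanged. Hence Theorem \ref{th-penalty} applies to $\tilde f$ and yields an integer $s$ such that for every $\gamma>s$ the point $\bar x$ is a local minimizer on $G$ of
\[
\tilde F(x,\gamma)=f(x)-A\norm{x-\bar x}+\gamma h(x)+0.5\norm{x-\bar x}^2 .
\]

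Unwinding this for $\gamma>s$: there is a neighbourhood $N_{\delta_\gamma}(\bar x)$ with
\[
F(x,\gamma)-A\norm{x-\bar x}=\tilde F(x,\gamma)\ge \tilde F(\bar x,\gamma)=f(\bar x)=F(\bar x,\gamma)
\]
for all $x\in G\cap N_{\delta_\gamma}(\bar x)$. This gives $F(x,\gamma)\ge F(\bar x,\gamma)+A\norm{x-\bar x}$, which is exactly isolated local minimality of $F(\cdot,\gamma)$ on $G$. Note the neighbourhood may depend on $\gamma$; this is allowed by the definition. In fact the proof of Theorem \ref{th-penalty} gives global minimality on the fixed set $G_\delta$, so the neighbourhood can even be taken uniform.

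The only point requiring care is verifying that the proof of Theorem \ref{th-penalty} really uses only the Lipschitz property of $f$ and the minimality of $\bar x$ over $S\cap N_\delta(\bar x)$. Inspecting it, $f$ enters only through three things: boundedness on the compact $G_\delta$, lower semicontinuity of $F$ (guaranteeing that minimizers $x^k$ exist), and the Lipschitz estimate $|f(x^k+t^k_nu^k_n)-f(x^k)|\le 2Lt^k_n$ in case 1$^0$. All three hold for $\tilde f$, with $L$ replaced by $L+A$.

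Case 2$^0$ uses $f(\bar x)\le f(x^k)$ for $x^k\in S$, which for $\tilde f$ is precisely the isolated minimality inequality. The identification $x^*=\bar x$ similarly uses $\tilde f(\bar x)\le\tilde f(x^*)$ for feasible $x^*$. So the reduction goes through, and the theorem follows.
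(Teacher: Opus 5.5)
Your proof is correct and is essentially the paper's argument. The paper reruns the proof of Theorem \ref{th-penalty} with $F(x,\gamma)-A\norm{x-\bar x}$ in place of $F$, while you apply Theorem \ref{th-penalty} as a black box to $\tilde f=f-A\norm{\cdot-\bar x}$, so your closing inspection of that proof is superfluous once its hypotheses are checked. Your packaging is cleaner: the Lipschitz constant $L+A$ of $\tilde f$ handles case $1^0$ automatically, and it sidesteps the paper's displayed bound on $\ld F(x^k,\gamma^k;u;G_\delta)$, whose right-hand side should be $A\scalpr{x^k-\bar x}{u}/\norm{x^k-\bar x}$ rather than $A\scalpr{x^k-\bar x}{u}$.
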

\begin{proof}
We use the arguments of Theorem \ref{th-penalty}. Suppose that there exist
$A>0$ and $\delta>0$ such that
\[
f(x)\ge f(\bar x)+A\norm{x-\bar x}
\]
for all feasible points $x$ of (P) such that $\norm{x-\bar x}\le\delta$.
We prove that there exists an integer $s$ with
\[
F(x,\gamma)\ge F(\bar x,\gamma)+A\norm{x-\bar x}
\]
for all feasible points $x$ of $F(x,\gamma)$ such that $\gamma>s$ and $\norm{x-\bar x}\le\delta$.
Assume the contrary that for every integer $k$ there exists $\gamma^k>k$
such that $\bar x$ is not a global minimizer of the function
$G(x,\gamma^k)=F(x,\gamma^k)-A\norm{x-\bar x}$ over $G_\delta$.
Since $G$ is lower semicontinuous and $G_\delta$ is compact the global
minimum of $G(x,\gamma^k)$ is attained at some point $x^k$. Therefore
\[
f(x^k)+\gamma^k h(x^k)+0.5\norm{x^k-\bar x}^2-A\norm{x^k-\bar x}
=F(x^k,\gamma^k)-A\norm{x^k-\bar x}\le F(\bar x,\gamma^k)=f(\bar x).
\]
Using the argument of Theorem \ref{th-penalty} we obtain that the sequence
$\{x^k\}$ converges to $\bar x$. It follows from the minimality of $x^k$
that
\[
\ld F(x^k,\gamma^k;u;G_\delta)\ge A\scalpr{x^k-\bar x}{u}\quad\forall u\in \E
\]
It follows from the arguments of Theorem \ref{th-penalty} that the case
there exists an infinite number of points $x^k$ such that $x^k\notin S$
is impossible. In the other case when $x^k\in S$ for all sufficiently large $k$
we have that $x^k\equiv\bar x$ for all sufficiently large $k$.
Therefore the claim of the theorem holds.
\end{proof}

The following lemma is a particular case of the Strict Separation Theorem. We prove it because the proof is very simple.

\begin{lemma}\label{lema1}
The following two conditions are equivalent:

\begin{equation}\label{14}
(a,b)\notin [-\infty,0]\times [-\infty,0]^p
\end{equation}
and
\begin{equation}\label{15}
\begin{array}{r}
\exists (\lambda,\mu)\in[0,+\infty)\times[0,\infty)^p: \lambda a+\scalpr{\mu}{b}>0,\\
\lambda=0\; \textrm{if}\; a=-\infty,
\; \mu_i=0\;\textrm{if}\; b_i=-\infty.
\end{array}\end{equation}
\end{lemma}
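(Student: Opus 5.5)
The plan is to prove the two implications directly, using the convention $(\pm\infty)\cdot 0=0$ adopted in Section~\ref{s3}. Here $a\in[-\infty,+\infty]$ and $b=(b_1,\dots,b_p)\in[-\infty,+\infty]^p$. Condition (\ref{14}) simply says that at least one coordinate of the vector $(a,b_1,\dots,b_p)$ is strictly positive, possibly $+\infty$. So the lemma reduces to showing that a nonnegative combination with the stated zero pattern can be positive exactly when some coordinate is positive.

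For (\ref{14})$\Rightarrow$(\ref{15}) I would pick a coordinate that is positive and put all the weight on it.
\begin{itemize}
\item If $a>0$, take $\lambda=1$ and $\mu=0$. Then $\lambda a+\scalpr{\mu}{b}=a+0>0$, because every product $0\cdot b_i$ is $0$, even when $b_i=\pm\infty$.
\item If $a\le 0$, some $b_{i_0}>0$. Take $\mu_{i_0}=1$ and set all other weights, including $\lambda$, equal to $0$. The sum is then $b_{i_0}>0$.
\end{itemize}
In both cases the side conditions hold automatically. The only nonzero weight sits on a coordinate that is positive, hence not $-\infty$. The choice of zero weights also means no expression of the form $+\infty-\infty$ ever appears.

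For (\ref{15})$\Rightarrow$(\ref{14}) I would argue by contraposition. Suppose $a\le 0$ and $b_i\le 0$ for all $i$, and take any admissible $(\lambda,\mu)$.
\begin{itemize}
\item Each term $\lambda a$ or $\mu_i b_i$ is either a product of a finite nonnegative weight with a finite nonpositive number, hence $\le 0$, or a term whose entry is $-\infty$.
\item By the side conditions in (\ref{15}), a term whose entry is $-\infty$ has weight $0$, so it equals $0$ by the convention.
\end{itemize}
So every term lies in $(-\infty,0]$, the sum is a well-defined number $\le 0$, and (\ref{15}) fails.

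The main point needing care is the bookkeeping of infinite values: the sum $\lambda a+\scalpr{\mu}{b}$ must be well defined. This is exactly the role of the side conditions in (\ref{15}) together with the convention $(\pm\infty)\cdot 0=0$. They exclude $-\infty$ contributions, so no indeterminate form $+\infty-\infty$ can arise. Apart from this, both directions are elementary, and no separation theorem is needed in this coordinate-wise form.
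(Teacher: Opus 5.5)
Your proof is correct and uses essentially the same coordinate-wise argument as the paper: putting unit weight on a positive coordinate is the direct form of the paper's contradiction step (``take $\mu=0$, then $\lambda a\le 0$ for all $\lambda>0$''), and your contrapositive for the converse matches the paper's argument. The only difference is how infinities are handled: you use the convention $0\cdot(\pm\infty)=0$ together with the side conditions, where the paper deletes the $-\infty$ components and reruns the finite argument, and your version is, if anything, cleaner.
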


{\begin{proof}
Let (\ref{14}) hold. We prove that condition (\ref{15}) is satisfied. Assume the contrary
that
\[
\lambda a+\scalpr{\mu}{b}\le 0
\]
for all $(\lambda,\mu)\in[0,+\infty)\times[0,\infty)^p$. Take $\mu=0$.
Then we have $\lambda a\le 0$ for all $\lambda>0$. Therefore $a\le 0$.
Using similar arguments we obtain that $b_i\le 0$ for all
$i=1,2,...,p$. 
This result contradicts to the condition (\ref{14}).

The claim is trivially satisfied if among $a$, $b_i$ $(i=1,2,...,p)$,
 there are infinities. For all $a$, $b_i$ equal to $-\infty$
we put the respective multipliers equal to 0. Then we remove all these ones equal
to $-\infty$ and apply the reasonings given above.

Let (\ref{15}) be satisfied. We prove condition (\ref{14}). Assume the
contrary that
\[
(a,b)\in [-\infty,0]\times [-\infty,0]^p.
\]
Therefore
\[
\lambda a+\scalpr{\mu}{b}\le 0\quad\forall\;
(\lambda,\mu)\in[0,+\infty)\times[0,\infty)^p
\]
which contradicts (\ref{15}).
\end{proof}

Denote by $C(\bar x)$ the cone
\[
C(x)=\{u\in T(X,x)\mid \d {(g_i)} (x;u;X)\le 0, i\in I(x)\}.
\]

The following regulatity condition is usually called the Abadie constraint qualification when the functions are differentiable:
\[
 T(G,\bar x)=C(\bar x).
\]

\begin{theorem}
Let the functions $f$, $g$, $h$ be Hadamard differentiable and $\bar x$ be
an isolated local minimimizer of {\rm (P)}. Suppose that the Abadie constraint qualification holds.
 Then, for every $u\in\E$
there exist
\[
\lambda=(\lambda_0,\lambda_1,...,\lambda_p)\in[0,\infty)^{p+1}
\]
such that
\begin{equation}
\lambda_0\ld F(\bar x;u;X)+\sum_{i=1}^p\lambda_i\d {(g_i)} (\bar x;u;X)> 0.
\end{equation}
If $\ld F (\bar x;u;X)=-\infty$, then $\lambda_0=0$.
If $\d {(g_i)} (\bar x;u;X)=-\infty$, then $\lambda_i=0$  $(i=1,2,...,p)$.
\end{theorem}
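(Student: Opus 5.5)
The plan is to reduce the statement to Lemma \ref{lema1}. Fix $u\in\E$ and set
\[
a:=\ld F(\bar x,\gamma;u;X),\qquad b_i:=\d{(g_i)}(\bar x;u;X),\quad i=1,\dots,p,
\]
where $\gamma>s$ and $s$ is the integer given by Theorem \ref{th4.2}. By Lemma \ref{lema1}, the existence of multipliers $\lambda\in[0,\infty)^{p+1}$ with the strict inequality, and with the vanishing rule for components equal to $-\infty$, is equivalent to
\[
(a,b)\notin[-\infty,0]\times[-\infty,0]^p .
\]
So it suffices to show that no $u$ satisfies both $a\le 0$ and $b_i\le 0$ for all $i\in I(\bar x)$.

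Assume such a $u$ exists. If $u\notin T(X,\bar x)$, then by definition $a=+\infty$, a contradiction. Hence $u\in T(X,\bar x)$ and $b_i\le 0$ for $i\in I(\bar x)$, so $u\in C(\bar x)$. By the Abadie constraint qualification, $u\in T(G,\bar x)$.

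Now apply Theorem \ref{th4.2}. Its hypotheses must be available here, namely those of Theorem \ref{th-penalty}, in particular (\ref{6}). Then $\bar x$ is an isolated local minimizer of $F(\cdot,\gamma)$ on $G$: there are $A>0$ and $\delta>0$ with
\[
F(x,\gamma)-F(\bar x,\gamma)\ge A\norm{x-\bar x}\quad\textrm{for } x\in G\cap N_\delta(\bar x).
\]
Take $u\neq 0$ (the case $u=0$ is the main obstacle, treated below). Since $u\in T(G,\bar x)$, there are $t_k\to+0$ and $u_k\to u$ with $\bar x+t_ku_k\in G$. The Hadamard differentiability of $f$ and of the $h_j$ (hence of $F$) makes the liminf in $a$ a limit along every such sequence. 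Dividing the isolated-minimum inequality by $t_k$ gives
\[
a=\lim_{k\to\infty} t_k^{-1}\bigl(F(\bar x+t_ku_k,\gamma)-F(\bar x,\gamma)\bigr)\ge A\norm{u}>0,
\]
which contradicts $a\le 0$.

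The main obstacle is $u=0$. There $a=0$ and $b_i=0$ whenever these derivatives are finite, so $(a,b)$ lies in the excluded set and the strict inequality cannot hold. I would handle this by restricting to $u\neq 0$ (equivalently $\norm{u}=1$), which is the only reading under which the statement can be true, and note it explicitly. A further caveat is that Theorem \ref{th4.2} requires the hypotheses of Theorem \ref{th-penalty}, including (\ref{6}). If these are not to be assumed, I would instead argue directly with the isolated minimality of $f$ on the feasible set. For $u$ tangent to $S$ this gives $\ld f\ge A\norm{u}$, and $h\ge 0$ with $h(\bar x)=0$ keeps the $\gamma h$ contribution nonnegative. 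The difficulty in this alternative is that $T(G,\bar x)$ may be larger than $T(S,\bar x)$, which is why I prefer to route through Theorem \ref{th4.2}.
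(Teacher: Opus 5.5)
Your argument is essentially the paper's. You rule out $\ld F(\bar x,\gamma;u;X)\le 0$ together with $\d{(g_i)}(\bar x;u;X)\le 0$ by passing from $C(\bar x)$ to $T(G,\bar x)$ via the Abadie condition, use the isolated minimality of $F(\cdot,\gamma)$ on $G$ from Theorem \ref{th4.2} along a tangent sequence to get $\ld F(\bar x;u;X)\ge A\norm{u}>0$, and conclude with Lemma \ref{lema1}. Your caveats are justified and sharpen the paper: its proof also relies on the unstated hypotheses of Theorem \ref{th-penalty} (including (\ref{6})) through Theorem \ref{th4.2}, tacitly uses Hadamard differentiability of $F$ to replace the liminf over $X$ by the limit along the $G$-sequence, and asserts $A\norm{u}>0$, which fails at $u=0$, where the strict inequality is indeed impossible.
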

\begin{proof}
We prove that there does not exist $u\in\E$ such that
\begin{equation}\label{system}
\ld F (\bar x,\gamma;u;X)\le 0,\quad \d {(g_i)}(\bar x;u;X)\le 0,\; i\in I(\bar x)
\end{equation}
for all sufficiently large $\gamma>0$. 

Assume the contrary. Let $u\in C(\bar x)$. It follows from $T(G,\bar x)=C(\bar x)$ that there exist
sequences $t_k\to+0$, $u_k\to u$ with $\bar x+t_k u_k\in G$. Therefore
$g_i(\bar x+t_k u_k)\le 0$, $i\in I(\bar x)$. It is clear that
$g_i(\bar x+t_k u_k)< 0$ for all sufficiently large $k$ when
$i\notin I(\bar x)$. Therefore $\bar x+t_k u_k$ satisfies the inequality constraints for all
sufficiently large integers $k$. By Theorem \ref{th4.2} we conclude that $\bar x$ is an isolated local minimizer of $F(x,\gamma)$ for all sufficiently large $\gamma$.  Using that
\[
F(\bar x+t_k u_k)\ge F(\bar x)+A t_k\norm{u_k}
\]
for all sufficiently large integers $k$, we obtain that
\[
\frac{F(\bar x+t_k u_k)-F(\bar x)}{t_k}\ge A \norm{u_k}.
\]
Taking the limits as $k$ approaches $+\infty$ we get
$\ld F(\bar x;u;X)\ge A\norm{u}>0$
which contradicts the inequalities (\ref{system}). 

Then the theorem follows by Lemma \ref{lema1} from
the incompatibility of the system (\ref{system}).
\end{proof}

\end{document}